\newtheorem*{thm1.3}{\bf Theorem 1.3}
\newtheorem*{thm1.4}{\bf Theorem 1.4}
\newtheorem*{lem2.1}{\bf Lemma 2.1}
\newtheorem*{lem2.2}{\bf Lemma 2.2}
\newtheorem*{lem2.3}{\bf Lemma 2.3}
\newtheorem*{lem2.4}{\bf Lemma 2.4}
\newtheorem*{lem3.1}{\bf Lemma 3.1} 
\newtheorem*{lem3.2}{\bf Lemma 3.2}
\newtheorem*{lem3.3}{\bf Lemma 3.3}
\newtheorem*{lem3.4}{\bf Lemma 3.4}
\newtheorem*{lem3.5}{\bf Lemma 3.5}
\newtheorem*{lem3.6}{\bf Lemma 3.6}
\newtheorem*{lem3.7}{\bf Lemma 3.7}
\newtheorem*{lem3.8}{\bf Lemma 3.8}
\newtheorem*{lem4.1}{\bf Lemma 4.1}
\newtheorem*{lem4.2}{\bf Lemma 4.2}	
\newtheorem*{lem4.3}{\bf Lemma 4.3}
\newtheorem*{lem5.1}{\bf Lemma 5.1}
\newtheorem*{lem5.2}{\bf Lemma 5.2}
\newtheorem*{lem5.3}{\bf Lemma 5.3}
\newtheorem{theorem}{Theorem}[section]
\newtheorem{lemma}[theorem]{lemma}
\newtheorem{remark}[theorem]{Remark}
\begin{document}
\begin{center}
\end{center}
\title{On the Riesz means of $\delta_k(n)$}
\author{ Saurabh Kumar Singh}

\address{ Stat-Math Unit,
Indian Statistical Institute, 
203 BT Road,  Kolkata-700108, INDIA.}

\email{skumar.bhu12@gmail.com}

\begin{abstract}
Let $k\geq 1$ be an integer. Let $\delta_k(n)$ denote the maximum divisor of $n$ 
which is co-prime to $k$. We study the error term of 
the general $m$-th Riesz mean of the arithmetical function 
$\delta_k(n)$ for any positive integer $m \ge 1$, namely the error 
term $E_m(x)$ where

\[
\frac{1}{m!}\sum_{n \leq x}\delta_k(n) \left( 1-\frac{n}{x} \right)^m    
= M_{m, k}(x) + E_{m, k}(x).
\]
We establish a non-trivial  upper bound for $\left | E_{m, k} (x) \right |$, for any 
integer $m\geq 1$.   
\end{abstract}
\maketitle

\noindent
{\bf Keywords :} Euler-totient function, Generating functions, Riemann 
zeta-function.

\noindent
{\bf Mathematics Subject Classification (2000) :} Primary 11A25  ; 
Secondary 11N37 .
\section{\bf Introduction}

For any fixed positive integer $k$, we define 
\begin{equation}
\delta_k(n)= \max \{d : d\mid n, \ \ (d, k)= 1  \}.
\end{equation} 
Joshi and Vaidya \cite{JV} proved that
\begin{equation}
\sum_{n \leq x} \delta_k(n)= \frac{k}{2 \sigma(k)} x^2 + E_k(x),
\end{equation} with $E_k(x)= O(x)$ and $\sigma(k)= \sum_{d \mid k}d$, when $k$ is a square free positive 
integer.  They also proved that when $k=p$, a prime,
\[
\varliminf_{n \to \infty} \frac{E_p(x)}{x} =-\frac{p}{p+1}, \ \ \ \textrm{ and } \ \ \ \varlimsup_{n \to \infty} \frac{E_p(x)}{x} =\frac{p}{p+1}.
\] 
 It was proved by Maxsein and Herzog \cite{MH} that for any square free positive integer $k$,   
\[
\varliminf_{n \to \infty} \frac{E_k(x)}{x} \leq -\frac{k}{\sigma(k)},  \ \ \ \textrm{ and } \ \ \ \varlimsup_{n \to \infty} \frac{E_k(x)}{x} \geq \frac{k}{\sigma(k)}. 
\] 

Around the same time, Adhikari, Balasubramanian and Sankaranarayanan \cite{ABS} proved the above results 
by a different method. While a tauberian theorem of Hardy-Littlewood and Karamata was used in \cite{MH} to get the asymptotic formula for $\sum_{n\leq x} \gamma_k(n)$, where $\gamma_k(n)$ is defined by the relation
$\delta_k(n)= \gamma_k * I(n)$ where $*$ is the  Dirichlet convolution and $I$ is the  
identity function, the method of \cite{ABS}
consists of averaging over arithmetical progressions. 

For $k\geq 1$ and square free, Harzog and Maxsein  \cite{MH} had also observed that

\[
\limsup_{x \to \infty}\frac{E_k(x)}{x} \leq \frac{1}{2} d(k),
\]
where $d(k)$ denotes the number of divisors of $k$.
Later Adhikari and Balasubramanian \cite{AB} improved this result of Maxsein and Herzog 
by showing that 
\[
\varlimsup_{n \to \infty} \frac{|E_k(x)|}{x} \leq \frac{1}{2} \left(1-\frac{1}{p+1} \right) d(k),  
\] where $p$ denotes the smallest prime dividing $k$. 

Writing
\[
H_k(x)= \sum_{n\leq x} \frac{\delta_k(n)}{n}- \frac{k x}{\sigma(k)},
\]
one observes (see \cite{ABS}) that 
\[
\frac{E_k(x)}{x} = H_k(x) + O(1). 
\]
 In \cite{AK}, more precise upper and lower bounds for the quantities $\varliminf H_k(x) $ and  
$\varlimsup H_k(x)$  were established.  The aim of this article is to study the error term of the general $m $-th Riesz mean related to the arithmetic function $ \delta_k(n)$ for any 
positive integer $m\ge 1$ and $k\geq 1$ (need not be a square free integer). More precisely, we write
\begin{equation}
\frac{1}{m!}\sum_{n \leq x} \delta_k(n) \left( 1-\frac{n}{x} \right)^m   
= M_{m, k}(x) + E_{m, k}(x)
\end{equation}
where $M_{m, k}(x)$ is the main term (exists) and $E_{m, k}(x)$ is the error term of the sum 
under investigation. We prove the following. 

\begin{theorem} \label{main}
Let $x \ge x_0$ where $x_0$ is a sufficiently large positive number and let $c(\eta)= \frac{2}{1-2^{-\eta}}$ for any $\eta > 0$. 
For any integer $m\ge 1$ and for any integer $k \geq 1$, we have

\[
 \frac{1}{m!}\sum_{n \leq x}\delta_k(n) \left( 1-\frac{n}{x} \right)^m    
=  \frac{ x^2}{(m+2)!} \prod_{p\mid k}\frac{p}{p+1}+ E_{m, k}(x),
\] where
\[
E_{1, k}  (x) \ll   k c(1/2)^{\omega(k)} x^{\frac {1}{2}} \log x, 
\] and for $m\geq 2$, we have 
\[
E_{m, k} (x) \ll   k c(\eta)^{\omega(k)} x^\eta
\]
for any small fixed positive constant $\eta$ and the implied constant is independent of $m$.
\end{theorem}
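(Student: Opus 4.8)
The plan is to pass to the Dirichlet series of $\delta_k$, represent the Riesz mean by a Mellin--Perron integral, and shift the line of integration leftward, so that the main term appears as a residue and the error term as the shifted integral. First I would record the generating function. Since $\delta_k$ is multiplicative with $\delta_k(p^a)=p^a$ for $p\nmid k$ and $\delta_k(p^a)=1$ for $p\mid k$, for $\Re(s)>2$ one has
\[
\sum_{n\ge1}\frac{\delta_k(n)}{n^s}=\zeta(s-1)\,F_k(s),\qquad F_k(s)=\prod_{p\mid k}\frac{1-p^{1-s}}{1-p^{-s}},
\]
where $F_k$ is holomorphic for $\Re(s)>0$ and a direct computation gives $F_k(2)=\prod_{p\mid k}\frac{p}{p+1}$.

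Next, using the discontinuous integral $\frac{1}{2\pi i}\int_{(c)}\frac{y^s}{s(s+1)\cdots(s+m)}\,ds=\frac{1}{m!}(1-1/y)^m$ for $y\ge1$ with $y=x/n$ and summing against $\delta_k(n)$, I obtain for any $c>2$
\[
\frac{1}{m!}\sum_{n\le x}\delta_k(n)\Big(1-\frac nx\Big)^m=\frac1{2\pi i}\int_{(c)}\zeta(s-1)F_k(s)\,\frac{x^s}{s(s+1)\cdots(s+m)}\,ds,
\]
the integral converging absolutely on $\Re(s)=c$ since the denominator decays like $|t|^{-(m+1)}$ with $m\ge1$. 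I then shift the contour to $\Re(s)=\sigma_0$, taking $\sigma_0=\eta$ when $m\ge2$ and $\sigma_0=\tfrac12$ when $m=1$. In the strip $\sigma_0<\Re(s)<c$ the only singularity is the simple pole of $\zeta(s-1)$ at $s=2$ (the poles of $F_k$ and of $1/s$ sit on $\Re(s)=0$, and at $s=1$ the zero of $F_k$ cancels $\zeta(0)$). Its residue is $F_k(2)\,x^2/\big(2\cdot3\cdots(m+2)\big)=\frac{x^2}{(m+2)!}\prod_{p\mid k}\frac{p}{p+1}$, exactly the claimed main term, and the shifted integral is $E_{m,k}(x)$.

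The factor $k\,c(\eta)^{\omega(k)}$ comes from bounding $F_k$ on the vertical line. Uniformly in $t$, for $0<\sigma\le1$,
\[
|F_k(\sigma+it)|\le\prod_{p\mid k}\frac{1+p^{1-\sigma}}{1-p^{-\sigma}}\le\prod_{p\mid k}\frac{2p^{1-\sigma}}{1-2^{-\sigma}}=c(\sigma)^{\omega(k)}\Big(\prod_{p\mid k}p\Big)^{1-\sigma}\le k\,c(\sigma)^{\omega(k)},
\]
using $1+p^{1-\sigma}\le2p^{1-\sigma}$ and $1-p^{-\sigma}\ge1-2^{-\sigma}$. For $m\ge2$ I then combine this with the functional-equation bound $\zeta(\eta-1+it)\ll(1+|t|)^{3/2-\eta}$ and $|s(s+1)\cdots(s+m)|\gg(1+|t|)^{m+1}$: the integrand is $\ll k\,c(\eta)^{\omega(k)}x^{\eta}(1+|t|)^{1/2-\eta-m}$, which is integrable with a bound independent of $m$, giving $E_{m,k}(x)\ll k\,c(\eta)^{\omega(k)}x^{\eta}$.

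The main obstacle is the borderline case $m=1$. On $\Re(s)=\tfrac12$ the amplitude is only $\ll(1+|t|)^{-1}$ (since $\zeta(-\tfrac12+it)\ll1+|t|$ and the denominator is $\asymp(1+|t|)^2$), so the shifted integral converges merely conditionally and a crude absolute bound fails. To produce exactly the factor $\log x$ I would split the integral at $|t|=x$: the range $|t|\le x$ contributes $\int_{|t|\le x}(1+|t|)^{-1}\,dt\ll\log x$, while the range $|t|>x$ is handled by one integration by parts in $t$, exploiting the oscillation of $x^{it}$ to gain a factor $1/\log x$ and render the tail negligible. Together with the bound on $F_k$ at $\sigma=\tfrac12$ this yields $E_{1,k}(x)\ll k\,c(1/2)^{\omega(k)}x^{1/2}\log x$.
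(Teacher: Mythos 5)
Your overall strategy --- the generating function $\zeta(s-1)G(s)$, the Riesz/Perron kernel, the contour shift to $\Re s=\eta$ (resp.\ $\Re s=\tfrac12$) picking up the residue at $s=2$ as the main term, and the bound $|G(\sigma+it)|\le k\,c(\sigma)^{\omega(k)}$ --- is exactly the paper's, and your treatment of the main term and of the case $m\ge2$ is sound: there the shifted integrand is $\ll k\,c(\eta)^{\omega(k)}x^{\eta}(1+|t|)^{1/2-\eta-m}$, absolutely integrable with a constant uniform in $m\ge 2$.

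The gap is in your handling of the range $|t|>x$ when $m=1$. You claim one integration by parts against $x^{it}$ gains $1/\log x$ and renders the tail negligible; this fails because $\zeta(-\tfrac12+it)$ is itself oscillatory. By the functional equation, $\zeta(-\tfrac12+it)=\chi(-\tfrac12+it)\zeta(\tfrac32-it)$ with $|\chi(-\tfrac12+it)|\asymp t$ and phase $\approx -t\log(t/2\pi)+t$, so the total phase of the integrand is $t\log(2\pi n x)-t\log t+t$ for the $n$-th term of $\zeta(\tfrac32-it)=\sum n^{-3/2+it}$; its derivative $\log(2\pi nx/t)$ vanishes at $t=2\pi nx$, i.e.\ every stationary point lies inside the very region $|t|>x$ you propose to kill by oscillation. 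Quantitatively, writing the non-$x^{it}$ part as $u(t)$, one has $|u'(t)|\gg(\log t)/t$ (from differentiating the phase of $\chi$), so after one integration by parts you are left with $\frac{1}{\log x}\int_x^{\infty}|u'(t)|\,dt$, which diverges, and iterating does not help since $|u^{(j)}(t)|\asymp(\log t)^j/t$. The result can still be rescued by a genuine stationary-phase (Voronoi-type) analysis, but the paper's route is far simpler and is what you should adopt: start from the \emph{truncated} Perron formula with error $O(4^m x^c\log x/T^m)$, take $T=x^{10}$, check the horizontal segments contribute $o(1)$, and bound the vertical integral over $|t|\le T$ by absolute values, since on $\Re s=\tfrac12$ the integrand is $\ll k\,c(1/2)^{\omega(k)}x^{1/2}/t$ and $\int_{t_0}^{T}dt/t=\log T\ll\log x$. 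That absolute-value estimate over the truncated range is where the factor $\log x$ actually comes from; no cancellation in $t$ is needed.
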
  

\section{Notation}
\begin{enumerate}

\item Throughout the paper, $s=\sigma + it$ ; the parameters $T$ and $x$ are 
sufficiently large real numbers and $m$ is an integer $\ge 1$.

\item $\eta, \ \epsilon$ always denote sufficiently  small positive constants. 
\item As usual $\zeta(s)$ denotes the Riemann zeta-function.
\item $k$ is any square free positive integer. 
\end{enumerate}

\section{\bf Some Lemmas}
 Generating function for $\delta_k(n) $ is given by:
 \begin{lemma}
 We have
 \[
 \sum_{n=1}^\infty \frac{\delta_k(n)}{n^s} = \zeta(s-1) G(s), 
 \] where 
 \begin{align*}
  G(s)= \sum_{n=1}^\infty \frac{g(n)}{n^s} = \prod_{p\mid k} \left( \frac{1- \frac{p}{p^s} }{1- \frac{1}{p^s} } \right) \ll k\  c(\eta)^{\omega(k)} ,
 \end{align*} for $\sigma \geq \eta$ and 
 \[
 c(\eta)= \frac{2}{1- 2^{-\eta}}. 
 \]
\end{lemma}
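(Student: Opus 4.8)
The plan is to first recognize $\delta_k$ as a multiplicative function, then read off its Dirichlet series as an Euler product, and finally estimate the resulting finite product $G(s)$ by hand.

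First I would verify multiplicativity. Since $(d_1d_2,k)=1$ exactly when $(d_1,k)=(d_2,k)=1$, and divisors of $n_1n_2$ with $(n_1,n_2)=1$ factor uniquely as $d_1d_2$ with $d_i\mid n_i$, the largest divisor coprime to $k$ is multiplicative in $n$. On a prime power one computes $\delta_k(p^a)=p^a$ when $p\nmid k$ (since $p^a$ is already coprime to $k$) and $\delta_k(p^a)=1$ when $p\mid k$ (no positive power of $p$ is coprime to $k$). Hence for $\sigma>2$, where the series converges absolutely because $\delta_k(n)\le n$, the Euler product factorizes as
\[
\sum_{n=1}^\infty \frac{\delta_k(n)}{n^s} = \prod_{p\nmid k}\frac{1}{1-p^{1-s}}\cdot\prod_{p\mid k}\frac{1}{1-p^{-s}}.
\]

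Next I would insert the missing local factors to recover the full zeta product. Writing $\prod_{p\nmid k}(1-p^{1-s})^{-1}=\zeta(s-1)\prod_{p\mid k}(1-p^{1-s})$ and combining with the factors over $p\mid k$ gives
\[
\sum_{n=1}^\infty \frac{\delta_k(n)}{n^s} = \zeta(s-1)\prod_{p\mid k}\frac{1-p^{1-s}}{1-p^{-s}} = \zeta(s-1)\,G(s),
\]
which is the asserted identity, valid first for $\sigma>2$ and then by analytic continuation for $\sigma>0$, since $G$ is a finite product of rational functions of $p^{-s}$ whose only poles lie on the line $\sigma=0$.

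Finally, for the bound I would estimate each local factor of $G(s)$ separately. In the numerator, $|1-p^{1-s}|\le 1+p^{1-\sigma}\le 1+p\le 2p$, using $\sigma>0$ and $p\ge 2$. In the denominator, for $\sigma\ge\eta$ the reverse triangle inequality gives $|1-p^{-s}|\ge 1-p^{-\sigma}\ge 1-2^{-\eta}$, uniformly in $p\ge 2$. Multiplying over the $\omega(k)$ primes dividing $k$,
\[
|G(s)|\le \prod_{p\mid k}\frac{2p}{1-2^{-\eta}} = c(\eta)^{\omega(k)}\prod_{p\mid k}p \le k\,c(\eta)^{\omega(k)},
\]
since $\prod_{p\mid k}p\le k$. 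The only mildly delicate point is arranging the numerator and denominator estimates so that the $p$-dependence collapses to exactly one factor of $p$ per prime while the $\eta$-dependence collapses to the single constant $c(\eta)=\tfrac{2}{1-2^{-\eta}}$; once those two uniform bounds are in place, the rest is routine.
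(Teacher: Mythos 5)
Your proposal is correct and follows essentially the same route as the paper: both derive the identity from the Euler product using $\delta_k(p^a)=1$ for $p\mid k$ and $\delta_k(p^a)=p^a$ for $p\nmid k$, and both bound $G(s)$ factor by factor via $|1-p^{1-s}|\le 2p$ and $|1-p^{-s}|\ge 1-2^{-\eta}$ to get $k\,c(\eta)^{\omega(k)}$. The extra care you take over multiplicativity and analytic continuation is sound but not needed beyond what the paper records.
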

\begin{proof}
We have (see  \cite[equation 2.2]{ABS}),
\begin{align*}
\sum_{n=2}^\infty \frac{\delta_k(n)}{n^s} &= \prod_p\left(1+\frac{\delta_k(p)}{p^s} + \frac{\delta_k(p^2)}{p^{2s} } +\cdots  \right)\\
&= \prod_{p\mid k} \left( 1+ \frac{1}{p^s}+ \frac{1}{p^{2s}} +\cdots \right)\prod_{p\nmid k} \left( 1+ \frac{p}{p^s}+ \frac{p^2}{p^{2s}}+ \cdots \right) \\
&= \zeta(s-1)\prod_{p\mid k}  \frac{1-\frac{1}{p^{s-1}} }{1-\frac{1}{p^s} } :=\zeta(s-1) G(s),
\end{align*}
since
\begin{equation*}
\delta_k(p^m) = 
\begin{cases}
1 \ \ \ \ \  \text{if} \ \ \ \ p\mid k \\
p^m \ \ \ \text{if} \ \ \ \ p\nmid k.
\end{cases}
\end{equation*}
And for $\sigma \geq \eta \ (>0)$, we observe that  
\begin{align*}
|G(s)| = \prod_{p\mid k} \left|  \frac{1-\frac{1}{p^{s-1}} }{1-\frac{1}{p^s} }  \right| \leq \prod_{p\mid k} \frac{1+ p^{1-\eta}}{ 1- \frac{1}{p^\eta}} \leq \prod_{p\mid k} \frac{2 p}{  1- \frac{1}{2^\eta} }
 \leq k c(\eta)^{\omega(k)}.  
\end{align*} 
\end{proof}

\begin{lemma} \label{lineinte}
Let $m$ be an integer $\ge 1$.
Let $c$ and $y$ be any positive real numbers and $T \ge T_0$ where $T_0$ 
is sufficiently large. Then we have,
 
\begin{equation*}
\frac{1}{2\pi i} \int_{c-iT}^{c+i T} \frac{y^s}{s(s+1)\cdots(s+m)} ds=
\begin{cases}
\frac{1}{m!} \left ( 1-\frac{1}{y} \right )^m + 
O \left ( \frac{4^m y^c}{T^m} \right ) \ &{\rm if } \ \ y \ge 1,\\
O \left ( \frac{1}{T^m} \right ) \ \ &{\rm if } \ \ 0 < y \leq 1.
\end{cases}
\end{equation*}
\end{lemma}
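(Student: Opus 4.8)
The plan is to treat the stated identity as a truncated Mellin (Perron-type) inversion, i.e.\ as the effective version of the inverse Mellin transform of $1/\bigl(s(s+1)\cdots(s+m)\bigr)$. Write $F(s)=\dfrac{y^s}{s(s+1)\cdots(s+m)}$, let $J_T=\frac{1}{2\pi i}\int_{c-iT}^{c+iT}F(s)\,ds$ denote the truncated integral, and let $I(y)=\frac{1}{2\pi i}\int_{(c)}F(s)\,ds$ denote the full vertical-line integral; the latter converges absolutely because $m\ge 1$ forces $|F(s)|=O\bigl(|t|^{-(m+1)}\bigr)$ on the line $\sigma=c$. I would first evaluate $I(y)$ in closed form and then control the discarded tails $I(y)-J_T$.

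To evaluate $I(y)$ I would shift the contour. For $y>1$ I move the line to $\sigma=-U$ with $U>m$: the horizontal bridges at height $\pm T'$ have length $c+U$ and integrand bounded by $y^c/(T')^{m+1}$, so they vanish as $T'\to\infty$, and the residue theorem gives $I(y)$ as the sum of the residues of $F$ at the poles $s=0,-1,\dots,-m$ plus the integral along $\sigma=-U$. The residue at $s=-j$ is $\dfrac{(-1)^j y^{-j}}{j!\,(m-j)!}$, since $\prod_{i\ne j}(i-j)=(-1)^j j!\,(m-j)!$, so the residues sum to
\[
\sum_{j=0}^{m}\frac{(-1)^j y^{-j}}{j!\,(m-j)!}=\frac{1}{m!}\sum_{j=0}^{m}\binom{m}{j}(-1)^j y^{-j}=\frac{1}{m!}\Bigl(1-\frac1y\Bigr)^m,
\]
while the contribution of the line $\sigma=-U$ is $O(y^{-U})$ and tends to $0$ as $U\to\infty$ because $y>1$. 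For $0<y<1$ I instead push the contour to $\sigma=+U$, where $|y^s|=y^{U}\to 0$ and no poles are crossed, giving $I(y)=0$. The boundary value $y=1$ follows from continuity of $I(y)$ (the absolute convergence being locally uniform in $y$), consistent with $\frac{1}{m!}(1-1/y)^m=0$ there.

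For the truncation error I would use the trivial bound $|s+j|=\bigl|(c+j)+it\bigr|\ge |t|$ for each $0\le j\le m$, which gives $|F(s)|\le y^c/|t|^{m+1}$ on $\sigma=c$. Hence
\[
|I(y)-J_T|\le \frac{1}{\pi}\int_{T}^{\infty}\frac{y^c}{t^{m+1}}\,dt=\frac{y^c}{\pi\,m\,T^{m}}\ll \frac{y^c}{T^{m}}.
\]
For $y\ge 1$ this yields $J_T=\frac{1}{m!}(1-1/y)^m+O\bigl(y^c/T^m\bigr)$, which is subsumed by the claimed $O\bigl(4^m y^c/T^m\bigr)$ (the factor $4^m$ is harmless slack). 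For $0<y\le 1$ we have $y^c\le 1$, so $|J_T|=|I(y)-J_T|\ll 1/T^m$, exactly as stated.

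The residue bookkeeping and the tail estimate are routine; the only genuine care is in justifying the contour shifts, namely verifying that the far vertical line $\sigma=\mp U$ contributes $o(1)$ as $U\to\infty$ (which uses $y\gtrless 1$ decisively) and that the horizontal bridges vanish, together with the separate continuity argument pinning down the case $y=1$, where the left-shift estimate degenerates. Everything else is a direct computation, and the clean $|t|^{-(m+1)}$ bound already outperforms the $4^m$ appearing in the statement.
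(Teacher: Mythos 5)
Your argument is correct and complete. There is nothing in the paper to compare it against in detail, because the paper gives no proof of this lemma at all: it simply points to Lemma 3.2 of Sankaranarayanan--Singh and to Theorem B on p.~31 of Ingham. What you have written is essentially the standard argument that underlies those references: evaluate the full vertical-line integral $I(y)$ by a contour shift (leftward for $y>1$, collecting the residues at $s=0,-1,\dots,-m$, whose sum collapses via $\prod_{i\ne j}(i-j)=(-1)^j\,j!\,(m-j)!$ and the binomial theorem to $\frac{1}{m!}\bigl(1-\frac1y\bigr)^m$; rightward for $0<y<1$, where no poles are crossed and $I(y)=0$), then bound the discarded tails $|t|>T$ on $\sigma=c$ by $|s+j|\ge|t|$, giving $O\bigl(y^c/(mT^m)\bigr)$. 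Your justification of the shifts (horizontal bridges of length $c+U$ bounded by $y^c/(T')^{m+1}$, far vertical line contributing $O(y^{\mp U})\to 0$) and the continuity argument pinning down the degenerate case $y=1$ are all sound, and your tail estimate in fact yields the error term without the factor $4^m$, i.e.\ a slightly sharper bound than the lemma states; that factor is harmless slack inherited from the cited sources. The only thing your write-up buys beyond the paper is self-containedness, which is a genuine improvement given that the paper leans entirely on external citations here.
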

\begin{proof}  
See  \cite[Lemma 3.2]{SS} and also \cite[p.31 Theorem B]{I}). 
\end{proof}

\begin{lemma} \label{functional equation}
The Riemann zeta-function  $ \zeta(s)$ is extended as a meromorphic function 
in the whole complex plane ${\mathbb C}$ with a simple pole at $s=1$ and it 
satisfies a functional equation $\zeta(s) = \chi (s) \zeta (1-s)$ where

\[
\chi (s) = \frac {\pi^{-(1-s)/2} \Gamma \left ( \frac {1-s}{2} \right )}{
\pi^{-s/2} \Gamma \left ( \frac {s}{2} \right )}.
\]
Also, in any bounded vertical strip, using Stirling's formula, we have

\[
\chi (s) = \left ( \frac {2\pi}{t} \right )^{\sigma + i t -1/2} \
e^{i \left ( t+ \frac {\pi}{4} \right )} \left( 1+O \left ( t^{-1} \right ) 
\right )
\]
as $|t| \rightarrow \infty$. Thus, in any bounded vertical strip, 

\[
|\chi (s)| \asymp t^{1/2-\sigma} \left ( 1+O \left ( t^{-1} \right ) \right )
\]
as $|t| \rightarrow \infty$. 
\end {lemma}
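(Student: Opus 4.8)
The statement is Riemann's classical theory of $\zeta(s)$, so the plan is to reproduce the standard derivation and then specialise to the quantitative form required. I would first establish the meromorphic continuation together with the functional equation in its symmetric form, then read off the explicit factor $\chi(s)$, and finally extract the large-$t$ behaviour from Stirling's formula; the modulus estimate is then an immediate consequence.

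For the functional equation I would argue through the Jacobi theta function $\theta(x)=\sum_{n\in\mathbb{Z}}e^{-\pi n^2 x}$, which satisfies the modular relation $\theta(1/x)=\sqrt{x}\,\theta(x)$ for $x>0$ (a direct consequence of the Poisson summation formula). Starting from the Mellin identity
\[
\pi^{-s/2}\Gamma\!\left(\tfrac{s}{2}\right)\zeta(s)=\frac{1}{2}\int_0^\infty x^{s/2-1}\bigl(\theta(x)-1\bigr)\,dx\qquad(\sigma>1),
\]
I would split the integral at $x=1$ and transform the part over $(0,1)$ by $x\mapsto 1/x$ using the modular relation. The elementary integrals of the constant terms supply two simple poles; on dividing back by $\pi^{-s/2}\Gamma(s/2)$ to recover $\zeta$, the pole at $s=0$ is absorbed by the pole of $\Gamma(s/2)$, leaving $\zeta$ meromorphic on $\mathbb{C}$ with a single simple pole at $s=1$. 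The remaining expression is visibly invariant under $s\mapsto 1-s$, i.e.\ $\xi(s)=\xi(1-s)$ with $\xi(s)=\pi^{-s/2}\Gamma(s/2)\zeta(s)$. Solving this identity for $\zeta(s)$ gives
\[
\zeta(s)=\frac{\pi^{-(1-s)/2}\Gamma\!\left(\frac{1-s}{2}\right)}{\pi^{-s/2}\Gamma\!\left(\frac{s}{2}\right)}\,\zeta(1-s)=\chi(s)\,\zeta(1-s),
\]
with $\chi(s)$ exactly as stated.

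For the asymptotics I would write $\chi(s)=\pi^{s-1/2}\,\Gamma\!\left(\frac{1-s}{2}\right)\big/\Gamma\!\left(\frac{s}{2}\right)$ and apply Stirling's expansion $\log\Gamma(z)=\left(z-\tfrac12\right)\log z-z+\tfrac12\log(2\pi)+O(|z|^{-1})$, valid uniformly for $z$ away from the negative real axis as $|\Im z|\to\infty$, taking $z=\frac{s}{2}$ and $z=\frac{1-s}{2}$. In the difference $\log\Gamma\!\left(\frac{1-s}{2}\right)-\log\Gamma\!\left(\frac{s}{2}\right)$ the constants $\tfrac12\log(2\pi)$ cancel and the two $-z$ terms combine to $s-\tfrac12$. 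The crucial point is that the logarithms must be expanded one order beyond the leading term: for $t>0$,
\[
\log\frac{s}{2}=\log\frac{t}{2}+i\frac{\pi}{2}-i\frac{\sigma}{t}+O(t^{-2}),\qquad \log\frac{1-s}{2}=\log\frac{t}{2}-i\frac{\pi}{2}+i\frac{1-\sigma}{t}+O(t^{-2}),
\]
and the $O(t^{-1})$ corrections, being multiplied by the $O(t)$-sized prefactors $\frac{s}{2}$, contribute at $O(1)$; they in fact cancel the bare constant coming from the $-z$ terms. Separating real and imaginary parts then gives $\log|\chi(s)|=\left(\sigma-\tfrac12\right)\log\frac{2\pi}{t}+O(t^{-1})$ and $\arg\chi(s)=t\log\frac{2\pi}{t}+t+\frac{\pi}{4}+O(t^{-1})$, which reassemble into $\chi(s)=\left(\frac{2\pi}{t}\right)^{\sigma+it-1/2}e^{i(t+\pi/4)}\bigl(1+O(t^{-1})\bigr)$ (for $t>0$; the case $t<0$ follows from $\chi(\bar s)=\overline{\chi(s)}$).

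Finally, the modulus statement follows at once from this expansion: since $\left|\left(\frac{2\pi}{t}\right)^{it}\right|=\bigl|e^{it\log(2\pi/t)}\bigr|=1$ and $|e^{i(t+\pi/4)}|=1$, taking absolute values leaves $|\chi(s)|=\left(\frac{2\pi}{t}\right)^{\sigma-1/2}\bigl(1+O(t^{-1})\bigr)=\left(\frac{t}{2\pi}\right)^{1/2-\sigma}\bigl(1+O(t^{-1})\bigr)\asymp t^{1/2-\sigma}\bigl(1+O(t^{-1})\bigr)$, uniformly for $\sigma$ in any bounded vertical strip. The one genuinely delicate step is the Stirling bookkeeping just described: one must retain the $O(t^{-1})$ terms in the logarithm expansions (they survive multiplication by the $O(t)$ prefactors) and track the principal branch and the sign of $t$ so that the surviving constant is exactly the $+\pi/4$ in the phase while the spurious $O(1)$ piece of the modulus cancels. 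Everything else is classical and may be compared with the standard treatment in \cite{I}.
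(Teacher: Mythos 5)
Your proof is correct: the theta-function/Poisson-summation derivation of the functional equation, the absorption of the pole at $s=0$ by $\Gamma(s/2)$, and the Stirling computation of $\chi(s)$ --- including the delicate second-order bookkeeping in which the $O(t^{-1})$ terms of the logarithms, multiplied by the $O(t)$ prefactors, cancel the real constant from the $-z$ terms and leave exactly the phase $e^{i(t+\pi/4)}$ --- all check out, as does the reduction to $t<0$ via $\chi(\bar s)=\overline{\chi(s)}$. The paper offers no argument of its own, proving the lemma by citation to Titchmarsh and Ivi\'c, and your derivation is precisely the classical treatment found in those references, so the two are in essence the same proof.
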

\begin{proof}  
See  \cite[p.116]{ECT} or   \cite[p.8-12]{IV}.
\end{proof}
\begin{lemma} \label{bound half}
We have  for $t\geq t_0$ (sufficiently large), 
\[
\zeta(\frac{1}{2}+ it ) \ll t^{1/6} (\log t)^{3/2} 
\] and 
\[
\zeta(1+it) \ll \log t. 
\]
\end{lemma}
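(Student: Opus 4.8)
The plan is to treat the two estimates separately: the bound on the line $\sigma=1$ by an elementary truncation of the Dirichlet series, and the subconvexity bound on the critical line by van der Corput's method.

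For $\zeta(1+it)$ I would start from the standard first-order Euler--Maclaurin truncation, valid for $\sigma>0$ and $N\ge 1$,
\[
\zeta(s)=\sum_{n\le N}\frac{1}{n^{s}}-\frac{N^{1-s}}{1-s}+O\!\left(\frac{|s|}{\sigma}N^{-\sigma}\right),
\]
which follows from partial summation against $\sum_{n\le u}1=u-\{u\}$. Specializing to $s=1+it$ and taking $N=\lceil t\rceil$, the main sum is $\sum_{n\le N}n^{-1}=\log N+O(1)=\log t+O(1)$, the term $N^{1-s}/(1-s)=N^{-it}/(-it)$ has modulus $1/t$, and the error is $O(|s|N^{-1})=O(1)$. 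Collecting these gives $\zeta(1+it)\ll\log t$ at once.

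The substantive part is the critical-line bound. First I would pass from $\zeta$ to a short Dirichlet polynomial by invoking the approximate functional equation (derivable from the functional equation and the asymptotics of $\chi(s)$ recorded in Lemma \ref{functional equation}): for $s=\tfrac12+it$,
\[
\zeta\!\left(\tfrac12+it\right)=\sum_{n\le N}n^{-1/2-it}+\chi\!\left(\tfrac12+it\right)\sum_{n\le N}n^{-1/2+it}+O\!\left(t^{-1/4}\right),
\]
with $N\asymp(t/2\pi)^{1/2}$. Since $\bigl|\chi(\tfrac12+it)\bigr|\asymp1$ on the critical line, it suffices to bound the single sum $S=\sum_{n\le N}n^{-1/2-it}$ of length $N\asymp t^{1/2}$. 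I would split $S$ dyadically into blocks $N_1<n\le 2N_1$ and, after removing the smooth weight $n^{-1/2}\asymp N_1^{-1/2}$ by partial summation, estimate each exponential sum $\sum_{N_1<n\le 2N_1}e(f(n))$, where $e(u)=e^{2\pi iu}$ and $f(u)=-\tfrac{t}{2\pi}\log u$. The phase satisfies $f'''(u)\asymp t/N_1^{3}$, so van der Corput's third-derivative test gives
\[
\sum_{N_1<n\le 2N_1}e(f(n))\ll N_1\Bigl(\tfrac{t}{N_1^{3}}\Bigr)^{1/6}+N_1^{1/2}\Bigl(\tfrac{t}{N_1^{3}}\Bigr)^{-1/6}=N_1^{1/2}t^{1/6}+N_1 t^{-1/6}.
\]
Reinstating the weight yields a contribution $\ll t^{1/6}+N_1^{1/2}t^{-1/6}$ per block; because $N_1\le N\ll t^{1/2}$ the second term is $\ll t^{1/12}\ll t^{1/6}$, so every dyadic block contributes $\ll t^{1/6}$. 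Summing over the $O(\log t)$ blocks produces $S\ll t^{1/6}\log t$, hence $\zeta(\tfrac12+it)\ll t^{1/6}\log t$, which is stronger than the asserted $t^{1/6}(\log t)^{3/2}$ and therefore implies it.

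The main obstacle is the exponential-sum estimate. Obtaining the exponent $1/6$ rather than the convexity value $1/4$ forces the use of the third-derivative (rather than the second-derivative / stationary-phase) test, and the second term $N_1^{1/2}t^{-1/6}$ produced by that test is harmless only because the approximate functional equation has already shortened the sum to length $t^{1/2}$; over the full range $n\le t$ this term would swell to $t^{1/3}$ and destroy the bound. The delicate point is thus to couple the correct truncation length with the correct derivative test, and to check that the logarithmic losses from the dyadic dissection and the partial summation stay within the claimed $(\log t)^{3/2}$.
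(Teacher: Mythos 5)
Your argument is correct, but it relates to the paper differently than you might expect: the paper's ``proof'' of this lemma is a bare citation to Titchmarsh (Theorem 5.5, p.~99 for the critical-line bound and Theorem 3.5, p.~49 for the $1$-line bound), whereas you reconstruct the classical arguments themselves. Your treatment of $\zeta(1+it)$ via the first-order Euler--Maclaurin truncation with $N\asymp t$ is the standard elementary proof and is complete as written. On the critical line, your route --- approximate functional equation to shorten the sum to length $N\asymp t^{1/2}$, dyadic dissection, partial summation to strip the weight $n^{-1/2}$, then van der Corput's third-derivative test --- is in substance the very proof lying behind the cited theorem, and your bookkeeping checks out: the per-block contribution $t^{1/6}+N_1^{1/2}t^{-1/6}\ll t^{1/6}$ uses $N_1\ll t^{1/2}$ correctly, and for small blocks where $t/N_1^{3}>1$ the third-derivative bound remains valid because it then dominates the trivial bound $N_1$. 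Your cleaner dyadic organization in fact yields $\zeta(\tfrac12+it)\ll t^{1/6}\log t$, slightly sharper than the $(\log t)^{3/2}$ asserted, which you rightly note implies the statement. Two caveats keep this from being fully self-contained: the approximate functional equation with error $O(t^{-1/4})$ is a nontrivial theorem in its own right (Titchmarsh, Theorem 4.15) and does not follow merely from the functional equation and the asymptotics of $\chi$ recorded in the paper's Lemma 3.3 --- its proof needs a separate contour-shift or saddle-point argument --- and you use implicitly that $\left|\chi(\tfrac12+it)\right|=1$ (immediate from $\chi(s)\chi(1-s)=1$) to reduce the mirrored sum to the conjugate of the first. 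So the comparison is: the paper's citation buys brevity, while your version buys a self-contained and marginally stronger bound, modulo importing the approximate functional equation as a standard tool.
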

\begin{proof}
See \cite[page 99, Theorem 5.5]{ECT} and \cite[page 49, Theorem 3.5]{ECT}
\end{proof}

\section{Proof of theorem 1.1}

From Lemma \ref{lineinte}, with $c = 2+ \frac{1}{\log x}$ and writing $F(s) :=  \zeta(s-1)G(s)$, we have

\begin{align}
S := \sum_{n \leq x} \delta_k(n) \left( 1-\frac{n}{x}
\right)^m \notag
&= \frac{1}{2\pi i } \int_{c-i\infty }^{c+i\infty }F(s)
\frac{x^s}{s(s+1)\cdots (s+m)} \ ds \notag\\
&= \frac {1}{2\pi i} \int_{c-iT }^{c+iT } F(s) \frac{x^s}{s(s+1)\cdots(s+m)}ds +
O \left ( \frac{4^m x^c \log x}{T^m} \right ).
\end{align} Note that the tail portion error term in the  above expression is actually
\[
\ll \frac{4^m}{T^m} x^c \sum_{n\leq x} \frac{\delta_k(n)}{n^c} \ll \frac{4^m x^c \log x}{T^m},
\] since $\delta_k(n)\leq n$.

\noindent
{\bf Case 1: } Let $m=1$. We move the line of integration in the above integral to
$\Re s = \frac{1}{2}$ . In the rectangular contour formed by the line segments 
joining the points $ c-iT$, $ c+iT$, $ \frac{1}{2}  +iT$,
$ \frac{1}{2} -iT$ and $ c-iT$ in the anticlockwise order, we 
observe that $s= 2$  is a simple pole of the integrand. Thus we get the main term $\frac{x^2}{(m+2)!} \prod_{p\mid k} \frac{p}{p+1}$ from  the residue coming from the pole  $s=2$.

\noindent
We note that
\begin{align}
&\frac {1}{2\pi i} \int_{c-iT }^{c+iT } F(s) \frac{x^s}{s(s+1)}ds\notag\\
&=  \frac {1}{2\pi i} \left \{ \int_{\frac {1}{2} +iT }^{c+iT } \cdots 
+ \int_{\frac {1}{2} -iT}^{\frac {1}{2} +iT} \cdots 
+  \int_{c-iT }^{\frac {1}{2}-iT } \cdots \right \} 
+ {\rm sum \ of \ the \ residues}. 
\end{align}
\noindent
The left vertical line segment contributes the quantity:

\begin{align}
Q_1 &:= \frac{1}{2\pi}\int_{-T}^{T}
F (1/2  + it)
\frac{x^{1/2  + it}dt}{(-1/2  + it)(1/2 + it)} 
 \ dt \notag\\
&= \frac {1}{2\pi} \left (\int \limits_{|t| \le t_0}
+ \int \limits_{ t_0 < |t| \leq T} \right ) \frac{x^{ \frac {1}{2}  + it} 
\zeta \left ( -\frac {1}{2}  + it \right )  G\left ( \frac {1}{2}  + it 
\right ) \ dt}{ \left (  \frac {1}{2} + it \right )
\left ( \frac {1}{2}  + it \right ) )} \notag\\
& \ll k\ c(1/2)^{\omega(k)} x^{1/2 } + k\ c(1/2)^{\omega(k)} x^{1/2} \int \limits_{t_0 < |t| \leq T}
t^{1/2-(-1/2  )}\left|\zeta(3/2  + it)G\left ( \frac {1}{2}  + it 
\right ) \right|  \frac{dt}{t^2} \notag\\
& \ll  k\ c(1/2)^{\omega(k)}  x^{1/2 }  + k\ c(1/2)^{\omega(k)} x^{1/2 } \int \limits_{t_0 < t\leq T}  \frac{dt}{t}.  \notag\\
 & \ll k\ c(1/2)^{\omega(k)} x^{1/2 } \log T.
\end{align}
\noindent 
Now we will estimate the contributions coming from the upper horizontal line (lower horizontal line is similar).

\noindent
The horizontal lines in total contribute a quantity which is in
absolute value
\begin{align*} 
&\ll \int_{1/2}^{c} \left | \zeta(\sigma -1 + iT) G(\sigma + iT) 
\frac{ x^{\sigma + iT}}{(\sigma + iT)(\sigma+1 + iT) } \right | d\sigma \\
& \ll \left(\int_{1/2}^1 + \int_1^{3/2} + \int_{3/2}^{c}  \right)|\zeta(\sigma -1 + iT) G(\sigma + iT) | \frac{x^\sigma}{T^2} d\sigma\\
&\ll  k\ c(1/2)^{\omega(k)}  \left\{ \left( \int_{1/2}^1    +  \int_1^{3/2} \right)   T^{1/2- \sigma +1} |\zeta (2- \sigma + iT)|  \frac{x^{\sigma}}{T^2} d\sigma    \right. \\
&\hspace{80pt} \left. + \int_{3/2}^{c}  |\zeta(\sigma -1 + iT)|\frac{x^{\sigma}}{T^2} d \sigma \right\} (\text{by Lemma  \ref{functional equation}}) \\
&\ll k\ c(1/2)^{\omega(k)} \left( \frac{x \log T }{T} + \frac{x^{3/2} \log T}{T^{3/2}} + \frac{x^2 \log T}{T^{ 11/6}} \right) ( \text{by Lemma \ref{bound half}}). 
\end{align*}

\noindent
Collecting all the estimates, and taking $T= x^{10}$ we get:
\begin{align}
E_{1,k}(x) &\ll k\ c(1/2)^{\omega(k)}  \left( x^{1/2 } \log T + \frac{x^2}{T} + \frac{x \log T }{T} + \frac{x^{3/2} \log T}{T^{3/2}} + \frac{x^2 \log T}{T^{ 11/6}}  \right) \notag\\
&\ll k\ c(1/2)^{\omega(k)} x^{1/2 } \log x. 
\end{align}

\noindent 
{\bf Case 2: }Let $m\geq 2$. We move the line of integration to $\Re s = \eta \ (>0)$. 

We note that

\begin{align}
&\frac {1}{2\pi i} \int_{c-iT }^{c+iT } F(s) \frac{x^s}{s(s+1)\cdots (s+m)}ds\notag\\
&=  \frac {1}{2\pi i} \left \{ \int_{\delta +iT }^{c+iT } \cdots 
+ \int_{\delta -iT}^{\delta +iT} \cdots 
+  \int_{c-iT }^{\delta-iT } \cdots \right \} 
+ {\rm sum \ of \ the \ residue}. 
\end{align}

\noindent
The left vertical line segment contributes the quantity:

\begin{align}
Q_m &:= \frac{1}{2\pi}\int_{-T}^{T}
F ( \eta + it)
\frac{x^{\eta  + it}dt}{(\eta + it)(\eta +1 + it) \cdots (\eta +m + it)} \ dt \notag\\
&= \frac {1}{2\pi} \left (\int \limits_{|t| \le t_0}
+ \int \limits_{ t_0< |t| \leq T} \right ) \frac{x^{  \eta + it} 
\zeta \left ( \eta -1  + it \right )  G\left ( \eta  + it 
\right ) \ dt}{ (  \eta  + it) 
 ( \eta +1  + it  )  \cdots (\eta + m+ it)} \notag\\
& \ll k\ c(\eta)^{\omega(k)}   x^\eta +  k\ c(\eta)^{\omega(k)} x^\eta \int \limits_{t_0 < |t| \leq T}
t^{1/2-(\eta -1  )}|\zeta(3/2 - \eta  + it)G ( \eta+ it) |  \frac{dt}{t^{m+1}} \notag\\
& \ll   k\ c(\eta)^{\omega(k)}  x^\eta  + k\ c(\eta)^{\omega(k)} x^\eta \int \limits_{t_0 < t\leq T}  \frac{t^{3/2- \eta}}{t^3} \ dt.  \notag \\
&\ll  k\ c(\eta)^{\omega(k)}  \  x^\eta .
\end{align}

\noindent
Now we will estimate the contributions coming from the upper horizontal line (lower horizontal line is similar).

\noindent
The horizontal lines in total contribute a quantity which is in
absolute value
\begin{align*} 
&\ll \int_\eta^{c} \left | \zeta(\sigma -1 + iT) G(\sigma + iT) 
\frac{ x^{\sigma + iT}}{(\sigma + iT)(\sigma+1 + iT) \cdots (\sigma+m + iT) } \right | d\sigma \\
&\ll c(\eta)^{\omega(k)}  k \left(\int_\eta^1 + \int_1^{3/2} + \int_{3/2}^{c}  \right) |\zeta(\sigma -1 + iT)| \frac{x^\sigma}{T^{k+1}} \\
&\ll  k\ c(\eta)^{\omega(k)}  \left\{ \left( \int_{\eta}^{1/2}+\int_{1/2}^1    +  \int_1^{3/2} \right)   T^{1/2- \sigma +1} |\zeta (2- \sigma + iT)|  \frac{x^{\sigma}}{T^{m+1}} d\sigma    \right. \\
& \hspace{85pt} \left. + \int_{3/2}^{c}  |\zeta(\sigma -1 + iT)|\frac{x^{\sigma}}{T^{m+1}} d \sigma \right\} \\
& \ll k\ c(\eta)^{\omega(k)} \left(  \frac{x^{1/2}}{T^{m-1/2 + \eta}}+  \frac{x \log T}{T^m} + \frac{x^{3/2} (\log T)^{3/2}}{T^{m+ 5/6}} + \frac{x^2 (\log T)^{3/2}}{T^{m+ 5/6}} \right)
\end{align*}

\noindent
Collecting all the estimates, and taking $T= x^{10}$, for $m\geq 2$ we get:
\begin{align}
E_{m,k}(x) 
\ll k \ c(\eta)^{\omega(k)}    x^\eta. 
\end{align} This proves  Theorem \ref{main}.

\begin{remark}
For $m\geq 2$ we may try to  move the line of integration slightly  left of vertical line $0$. On the line $\Re s= 0$, the function $G(s)$ has simple poles at the points $s(\ell, p)= \frac{2 \pi i \ell}{ \log p}$ \  $\forall \ell \in \mathbb{Z}$ and for each prime $p\mid k$. let $p_1, p_2, \cdots p_{r_k}$ be the primes dividing $k$. The total contribution from the simple poles at the points $s(\ell, p)= \frac{2 \pi i \ell}{ \log p_j}$ for $1\leq j \leq r_k$ is given by:
\[
M= \sum_{j=1}^{r_k} \sum_{|\ell|\leq \frac{T \log p_j}{2 \pi}} \zeta \left(\frac{2 \pi i \ell}{ \log p_j} -1 \right) \prod_{p_i\neq p_j} \left( 1-\frac{p_i}{p_j^{\frac{2 \pi i \ell}{ \log p_j}}}\right) \frac{x^{\frac{2 \pi i \ell}{ \log p_j}}}{\frac{2 \pi i \ell}{ \log p_j} \left( \frac{2 \pi i \ell}{ \log p_j}+1\right)\cdots \left( \frac{2 \pi i \ell}{ \log p_j}+m\right)}.
\] 
If one establishes  that $M=o\left(x^\eta\right)$, then this will improve the error term. This seems to be really difficult.   
\end{remark}
\begin{remark}
From the Theorem  \ref{main} observe that 
\[
E_{1,k}(x) \ll_\epsilon x^{1/2+ 10\epsilon} 
\] uniformly for $3\leq k \ll x^\epsilon$ since $\omega(k) \ll \frac{\log}{\log \log k}$ for $k \geq 3$. Also  $ E_{m,k}(x) \ll x^{c_1\eta}$
 uniformly for $3\leq k \ll x^\epsilon$, where $c_1$ is effective positive constant. 
\end{remark}

\noindent
{\bf Acknowledgement:} Author would like to thank Prof. S. D. Adhikari and Prof. A. Sankaranarayanan for suggesting the problem and for all fruitful discussion and suggestions. 
\vskip 1mm
\noindent 
{}


\end{document}